\newcommand{\doublehat}[1]{%
    \hat{\underline{#1}}}
\newcommand{\mA}{\mathbf{A}}
\newcommand{\mI}{\mathbf{I}}
\newcommand{\mJ}{\mathbf{J}}
\newcommand{\mQ}{\mathbf{Q}}
\newcommand{\mT}{\mathbf{T}}
\newcommand{\mU}{\mathbf{U}}
\newcommand{\mV}{\mathbf{V}}
\newcommand{\mW}{\mathbf{W}}
\newcommand{\ve}{\mathbf{e}}
\newcommand{\vf}{\mathbf{f}}
\newcommand{\vg}{\mathbf{g}}
\newcommand{\vh}{\mathbf{h}}
\newcommand{\vt}{\mathbf{t}}
\newcommand{\vu}{\mathbf{u}}
\newcommand{\vv}{\mathbf{v}}
\newcommand{\vx}{\mathbf{x}}
\newcommand{\sE}{\mathcal{E}}
\newcommand{\sF}{\mathcal{F}}
\newcommand{\sG}{\mathcal{G}}
\newcommand{\sT}{\mathcal{T}}
\newcommand{\sX}{\mathcal{X}}
\newcommand{\bpi}{\mbox{\boldmath$\pi$}}
\newcommand{\bphi}{\mbox{\boldmath$\phi$}}
\newcommand{\blambda}{\mbox{\boldmath$\lambda$}}
\newcommand{\bmat}[1]{\begin{bmatrix}#1\end{bmatrix}}
\newtheorem{theorem}{Theorem}
\journal{AMC}
\begin{document}

\begin{frontmatter}

%% Title, authors and addresses

%% use the tnoteref command within \title for footnotes;
%% use the tnotetext command for the associated footnote;
%% use the fnref command within \author or \address for footnotes;
%% use the fntext command for the associated footnote;
%% use the corref command within \author for corresponding author footnotes;
%% use the cortext command for the associated footnote;
%% use the ead command for the email address,
%% and the form \ead[url] for the home page:
%%
%% \title{Title\tnoteref{label1}}
%% \tnotetext[label1]{}
%% \author{Name\corref{cor1}\fnref{label2}}
%% \ead{email address}
%% \ead[url]{home page}
%% \fntext[label2]{}
%% \cortext[cor1]{}
%% \address{Address\fnref{label3}}
%% \fntext[label3]{}

\title{A Lanczos Method for Approximating Composite Functions}

%% use optional labels to link authors explicitly to addresses:
%% \author[label1,label2]{<author name>}
%% \address[label1]{<address>}
%% \address[label2]{<address>}

\author[label2]{Paul G. Constantine\corref{cor1}}
\ead{paul.constantine@stanford.edu}
\cortext[cor1]{Building 500, Room 501-L, Tel: (650) 723-2330}
\author[label1]{Eric T. Phipps}
\ead{etphipp@sandia.gov}
\fntext[label2]{Sandia National Laboratories is a multi-program laboratory managed
and operated by Sandia Corporation, a wholly owned subsidiary of Lockheed Martin Corporation, for the U.S. Department
of Energy's National Nuclear Security Administration under contract DE-AC04-94AL85000.}

\address[label1]{Sandia National Laboratories\fnref{label2}, Albuquerque, NM 87185}
\address[label2]{Stanford University, Stanford, CA 94305}

\begin{abstract}
We seek to approximate a composite function $h(\vx)=g(f(\vx))$ with a global polynomial. The standard approach chooses
points $\vx$ in the domain of $f$ and computes $h(\vx)$ at each point, which requires an evaluation of $f$ and an
evaluation of $g$. We present a Lanczos-based procedure that implicitly approximates $g$ with a polynomial of $f$. By
constructing a quadrature rule for the density function of $f$, we can approximate $h(\vx)$ using many fewer evaluations
of $g$. The savings is particularly dramatic when $g$ is much more expensive than $f$ or the dimension of $x$
is large. We demonstrate this procedure with two numerical examples: (i) an exponential function composed with a
rational function and (ii) a Navier-Stokes model of fluid flow with a scalar input parameter that depends on multiple
physical quantities.
\end{abstract}

\begin{keyword}
dimension reduction \sep Lanczos' method \sep orthogonal polynomials \sep Gaussian quadrature

%% MSC codes here, in the form: \MSC code \sep code
%% or \MSC[2008] code \sep code (2000 is the default)

\end{keyword}

\end{frontmatter}

%%
%% Start line numbering here if you want
%%
% \linenumbers

\section{Introduction \& Motivation}

Many complex multiphysics models employ composite functions, where each member function represents a different physical
model. For example, consider a simple chemical reaction model; the decay of the concentration
depends on the decay rate parameter, but the model for the decay rate (i.e., the Arrhenius model) depends on
the temperature, the gas constant, the activation energy, and the prefactor. The concentration is then a function of the
inputs to the rate model. Studying the effects of these inputs on the concentration could be challenging due to
the large number of combinations of the four rate model inputs. However, one may make use of the composite
structure -- namely, that the concentration depends on these four inputs only through the rate model -- to find a set of
values for the rate parameter with which to study the effects on the concentration. By taking advantage of this
structure, one can hope to reduce the cost of such a study by using fewer evaluations of the models. 

We consider the general setting of a composite function 
\begin{equation}
\label{eq:hdef}
h(\vx)=g(f(\vx)), \qquad \vx=(x_1,\dots,x_d)\in\sX\subset\mathbb{R}^d
\end{equation} 
where
\begin{align}
f&:\sX \;\longrightarrow\; \sF\; \subset\; \mathbb{R}\\
g&:\sF \;\longrightarrow\; \sG\; \subset\; \mathbb{R}.
\end{align}
One may be interested in understanding how $h$ behaves as $\vx$ changes. However, if evaluating $h$ is computationally
expensive, then studies that require many evaluations may be infeasible. A common approach to this situation
is to construct a surrogate approximation of $h$ whose evaluations are cheaper given inputs $\vx$. With
a fixed number of $h$ evaluations, one determines the parameters (e.g., coefficients) of the surrogate model, and then
the surrogate is used to study the behavior of $h$ with respect to changes in $\vx$. 

Global polynomials~\cite{Xiu02,Xiu05} in $\vx$ are popular models for surrogate functions due to their rapid
convergence rate as more terms are added to the approximation. For smooth functions, this often translates to
relatively few evaluations to construct an accurate approximation. The approximation properties of univariate
polynomials ($d=1$) are well-studied. Multivariate ($d>1$) polynomial approximations are less well understood, but a
simple tensor product construction extends the univariate approximation characteristics to the multivariate setting.
Unfortunately, such tensor product approximations suffer from the so-called curse of dimensionality. Loosely speaking,
the number of function evaluations required to construct an accurate approximation increases exponentially as the
dimension of $\vx$ increases. When evaluations of $h$ are expensive, this often precludes the use of a multivariate
polynomial surrogate.

In this work, we propose a strategy that takes advantage of the composite structure of $h$ to reduce the overall
cost of building a multivariate polynomial surrogate. 
Suppose one needs $m$ points in $\sX$ to construct a multivariate polynomial surrogate of $h(\vx)$, where $m$ may be an
exponential function of $d$. Then each evaluation of $h(\vx)$ requires first an evaluation of $f(\vx)$ followed by an
evaluation of $g(f)$, i.e., $2m$ total function evaluations. 

Our proposed strategy computes the same $m$ evaluations of $f$, but uses them to
implicitly approximate a univariate density function on the range space $\sF$. We then find $k \ll m$ points in $\sF$
on which to evaluate $g$. The process of finding the $k$ points in $\sF$ yields a transformation from the $k$
evaluations of $g$ to $m$ approximations of $h$; these $m$ approximations of $h$ are then used to compute the
coefficients of a multivariate polynomial surrogate of $h$.
We therefore reduce the cost of the construction from $m$ evaluations of $f$ and $m$ evaluations of $g$ to $m$
evaluations of $f$ and $k \ll m$ evaluations of $g$, plus the cost of finding the $k$ points and computing/applying the
transformation. This is particularly advantageous when $g(f)$ is much more expensive to evaluate than $f(\vx)$.

The $k$ points that we find in $\sF$ are the Gaussian quadrature points from the implicitly approximated density
function of $f$. The process constructs a set of polynomials in $f$ that are orthonormal with respect to the density
function of $f$. The function $g$ is then approximated as a truncated series in these basis polynomials of $f$. 

We use a discrete Stieltjes procedure~\cite{Gautschi04} to compute the recurrence coefficients of the orthogonal
polynomials in $f$, and we show how this is equivalent to a Lanczos' method~\cite{Lanczos50,Meurant06_2} on a diagonal
matrix of the evaluations of $f$ with a properly chosen starting vector. The basis vectors from the Lanczos iteration
are used to linearly map the $k$ evaluations of $g(f)$ to $m$ approximations of $g(f(\vx))$, which are then used to
approximate the coefficients of the multivariate polynomial surrogate of $h$.

A similar measure transformation approach was used in~\cite{Gerritsma2010,Poette2012} to approximate functions with
sharp gradients. However, the theoretical and computational advantages gained by exploiting the connection to Lanczos'
method were not explored.

In what follows, we review the polynomial approximation, Gaussian quadrature, and Lanczos' method (Section
\ref{sec:prelim}); define the problem and derive the approximation method (Section \ref{sec:setup}); and demonstrate
its applicability on two numerical examples: (i) an exponential function composed with a rational function and (ii) a
Navier-Stokes model of fluid flow with a scalar input parameter that depends on multiple physical quantities (Section
\ref{sec:examples}).

\section{Preliminaries}
\label{sec:prelim}

In this section, we briefly review Gaussian quadrature and polynomial approximation, as well as Stieltjes' and
Lanczos' methods. This will also serve to set up notation; we will use notation very similar to  
Gautschi~\cite{Gautschi04}.

\subsection{Orthogonal polynomials, Gaussian quadrature, and pseudospectral approximation}
Let $\sT\subset\mathbb{R}$ be equipped with a positive, normalized weight function $\omega:\sT\rightarrow\mathbb{R}_+$
with finite moments; denote an element of $\sT$ by $t$. For functions $u(t)$ and $v(t)$, define the inner product
\begin{equation}
\label{eq:innerproduct}
(u,v) \;=\; \int_\sT u(t)\,v(t)\,\omega(t)\,dt
\end{equation} 
with associated norm $\|u\| = (u,u)^{1/2}$. Let $\{\bar{\pi}_i(t)\}$ be the set of monic polynomials (i.e., leading
coefficient is 1) that are orthogonal with respect to $\omega(t)$,
\begin{equation}
(\bar{\pi}_i,\bar{\pi}_j) = \left\{
\begin{array}{cl}
\|\bar{\pi}_i\|^2, & \mbox{ if $i=j$}\\
0 & \mbox{ otherwise.}
\end{array}
\right.
\end{equation}
The monic orthogonal polynomials satisfy the recurrence relationship 
\begin{equation}
\label{eq:monic3term}
\bar{\pi}_{i+1}(t) = (t-\alpha_i)\bar{\pi}_i(t) - \beta_i \bar{\pi}_{i-1}(t), \quad i=0,1,2,\dots,
\end{equation}
with $\bar{\pi}_{-1}(t)=0$ and $\bar{\pi}_0(t)=1$. The $\alpha_i$ and $\beta_i$ are given by
\begin{align}
\alpha_i &= \frac{(t\bar{\pi}_i,\,\bar{\pi}_i)}{(\bar{\pi}_i,\bar{\pi}_i)}, \quad i=0,1,2,\dots,
\label{eq:alpha}\\
\beta_i &= \frac{(\bar{\pi}_i,\,\bar{\pi}_i)}{(\bar{\pi}_{i-1},\bar{\pi}_{i-1})}, \quad i=1,2,\dots.
\label{eq:beta}
\end{align}
It is often more convenient to work with orthonormal instead of monic orthogonal polynomials, which we write as
\begin{equation}
\pi_i(t) = \frac{\bar{\pi}_i(t)}{\|\bar{\pi}_i\|}.
\end{equation}
The recurrence relationship for the orthonormal polynomials becomes
\begin{equation}
\label{eq:3term}
\sqrt{\beta}_{i+1}\pi_{i+1}(t) = (t-\alpha_i)\pi_i(t) - \sqrt{\beta_i}\pi_{i-1}(t), \qquad i=0,1,2,\dots,
\end{equation}
with $\pi_{-1}(t)=0$ and $\pi_0(t)=1$. If we consider only the first $n$ equations, then 
\begin{equation}
t\pi_i(t) = \sqrt{\beta_i}\pi_{i-1}(t)+\alpha_i\pi_i(t)+\sqrt{\beta_{i+1}}\pi_{i+1}(t),\qquad i=0,1,\dots,n-1.
\end{equation}
Setting $\bpi(t) = [\pi_0(t),\pi_1(t),\dots,\pi_{n-1}(t)]^T$, we can write this conveniently in matrix form as
\begin{equation}
\label{eq:mat3term}
t\,\bpi(t) = \mJ\,\bpi(t) + \sqrt{\beta_{n}}\,\pi_{n}(t)\,\ve_{n},
\end{equation}
where $\ve_n$ is a vector of zeros with a one in the last entry, and $\mJ=\mJ_n$ (known as the \emph{Jacobi matrix}) is
an $n\times n$ symmetric, tridiagonal matrix
\begin{equation}
\label{eq:jacobi}
\mJ = 
\begin{bmatrix}
\alpha_0 & \sqrt{\beta_1}& & &  \\
\sqrt{\beta_1} & \alpha_1 & \sqrt{\beta_2} & &  \\
 & \ddots & \ddots & \ddots &  \\
 & & \sqrt{\beta_{n-2}} & \alpha_{n-2} & \sqrt{\beta_{n-1}}\\
 & & & \sqrt{\beta_{n-1}} & \alpha_{n-1}
\end{bmatrix}.
\end{equation}
The zeros $\{\lambda_j\}$ of $\pi_{n}(t)$ are the eigenvalues of $\mJ$ and $\bpi(\lambda_j)$ are the corresponding
eigenvectors; this follows directly from \eqref{eq:mat3term}. Let $\mQ$ be the orthogonal matrix of eigenvectors of
$\mJ$; the elements of $\mQ$ are given by
\begin{equation}
\label{eq:eigvJ}
\mQ(i,j) = \frac{\pi_i(\lambda_j)}{\|\bpi(\lambda_j)\|_2},\qquad i,j=0,\dots,n-1,
\end{equation}
where $\|\cdot\|_2$ is the standard 2-norm on $\mathbb{R}^n$. We write the eigenvalue decomposition of $\mJ$ as
\begin{equation}
\label{eq:eigJ}
\mJ = \mQ\Lambda\mQ^T.
\end{equation}
It is known that the eigenvalues $\{\lambda_j\}$ are the nodes of the $n$-point Gaussian quadrature rule associated
with the weight function $\omega(t)$. The quadrature weight $\nu_j$ corresponding to $\lambda_j$ is equal to the square
of the first component of the eigenvector associated with $\lambda_j$, 
\begin{equation}
\label{eq:qweights}
\nu_j \;=\; \mQ(0,j)^2 \;=\; \frac{1}{\|\bpi(\lambda_j)\|_2^2}.
\end{equation}
The weights $\{\nu_j\}$ are known to be strictly positive. 
It will be notationally convenient to define the matrix $\mW=\mathrm{diag}([\sqrt{\nu_0},\dots,\sqrt{\nu_{n-1}}])$.

For an integrable scalar function $f(t)$, we can approximate
its integral by an $n$-point Gaussian quadrature rule. Let $f_j=f(\lambda_j)$. The quadrature approximation
is a weighted sum of function evaluations,
\begin{equation}
\label{eq:gq}
\int_\sT f(t)\,\omega(t)\,dt = \sum_{j=0}^{n-1} f_j\,\nu_j + R(f).
\end{equation}
If $f(t)$ is a polynomial of degree less than or equal to $2n-1$, then $R(f)=0$; that is to say the \emph{degree of
exactness} of the Gaussian quadrature rule is $2n-1$. 

A square integrable function $f(t)$ admits a mean-squared convergent Fourier series in the orthonormal polynomials. A
pseudospectral approximation of $f(t)$ is constructed by first truncating its Fourier series at $n$ terms and
approximating each Fourier coefficient with a quadrature rule. If we use the $n$-point Gaussian quadrature, then we can write
\begin{equation}
\label{eq:pseudospec}
f(t) \;\approx\; \sum_{i=0}^{n-1} \hat{f}_i\,\pi_i(t),
\end{equation}
where
\begin{equation}
\label{eq:pseudocoeff}
\hat{f}_i \;=\; \sum_{j=0}^{n-1} f_j\,\pi_i(\lambda_j)\,\nu_j.
\end{equation}
Let $\vf=[f_0,\dots,f_{n-1}]^T$ and $\hat{\vf}=[\hat{f}_0,\dots,\hat{f}_{n-1}]^T$. Then using $\mQ$ from
\eqref{eq:eigJ}, \eqref{eq:pseudospec}, and \eqref{eq:pseudocoeff}, we can write
\begin{equation}
\label{eq:matpseudo}
\hat{\vf} \;=\; \mQ\mW\vf
\qquad
f(t) \;\approx\; \hat{\vf}^T\bpi(t) \;=\; \vf^T\mW^T\mQ^T\bpi(t).
\end{equation}
The expression $\hat{\vf}=\mQ\mW\vf$ is the discrete Fourier transform for the orthogonal polynomial basis.
Note that it is easy to show that the pseudospectral approximation interpolates $f(t)$ at the Gaussian quadrature
points.

\subsubsection{Tensor product extensions}
\label{sec:tpquad}
The above concepts extend to multivariate functions via a tensor product construction. Let
$\sT=\sT_1\otimes\cdots\otimes\sT_d$ be the domain with elements $\vt=(t_1,\dots,t_d)$. We assume that the weight
function $\omega:\sT\rightarrow\mathbb{R}_+$ is separable, i.e. $\omega(\vt)=\omega_1(t_t)\cdots\omega_d(t_d)$, where
each univariate weight function is normalized and positive with finite moments.

Tensor product Gaussian quadrature rules are constructed by taking cross products
of univariate Gaussian quadrature rules: 
\begin{equation}
\label{eq:quadd}
\blambda_{i_1,\dots,i_d}=(\lambda_{i_1},\dots,\lambda_{i_d}),
\end{equation}
where the points $\lambda_{i_r}$ with $i_r=0,\dots,n_r-1$ are the univariate quadrature points for $\omega_r(t_r)$, with
$r=1,\dots,d$. The associated quadrature weights are given by the products
\begin{equation}
\nu_{i_1,\dots,i_d}=\nu_{i_1}\cdots\nu_{i_d}.
\end{equation}
To approximate the integral of $f(\vt)$, compute
\begin{equation}
\int_\sT f(\vt)\,\omega(\vt)\,d\vt
\;\approx\;
\sum_{i_1=0}^{n_1-1}\cdots\sum_{i_d=0}^{n_d-1} f(\blambda_{i_1,\dots,i_d})\, \nu_{i_1,\dots,i_d}.
\end{equation}
The tensor product pseudospectral approximation is given by
\begin{equation}
f(\vt)
\;\approx\;
\sum_{i_1=0}^{n_1-1}\cdots\sum_{i_d=0}^{n_d-1} \hat{f}_{i_1,\dots,i_d}\, \pi_{i_1}(t_1)\cdots\pi_{i_d}(t_d),
\end{equation}
where 
\begin{equation}
\hat{f}_{i_1,\dots,i_d} 
\;=\; 
\sum_{j_1=0}^{n_1-1}\cdots\sum_{j_d=0}^{n_d-1}
f(\blambda_{i_1,\dots,i_d})\,\pi_{i_1}(\lambda_{j_1})\cdots\pi_{i_d}(\lambda_{j_d})
\,\nu_{i_1,\dots,i_d}.
\end{equation}
The matrix notation extends via Kronecker products. Let $\bpi_{n_r}(t_r)$ be the vector of univariate polynomials that
are orthonormal with respect to $\omega_r(t_r)$ for $r=1,\dots,d$. Then the vector
\begin{equation}
\bpi(\vt) \;=\; \bpi_{n_1}(t_1)\otimes\cdots\otimes\bpi_{n_d}(t_d)
\end{equation}
contains multivariate polynomials that are orthonormal with respect to $\omega(\vt)$. Similarly, define the matrices
\begin{equation}
\label{eq:krondft}
\mQ = \mQ_{n_1}\otimes\cdots\otimes\mQ_{n_d}, \qquad
\mW = \mW_{n_1}\otimes\cdots\otimes\mW_{n_d}.
\end{equation}
Then 
\begin{equation}
\label{eq:matpseudod}
\hat{\vf} \;=\; \mQ\mW\vf
\qquad
f(\vt) \;\approx\; \hat{\vf}^T\bpi(\vt) \;=\; \vf^T\mW^T\mQ^T\bpi(\vt),
\end{equation}
where $\hat{\vf}$ is an $(n_1\cdots n_d)$-vector of the tensor product pseudospectral coefficients, $\vf$ is an
$(n_1\cdots n_d)$-vector containing the evaluations of $f$ at the points given by \eqref{eq:quadd}, ordered
appropriately. Again, the expression $\hat{\vf}=\mQ\mW\vf$ is the $d$-dimensional discrete Fourier transform for the
polynomial basis. 

\subsection{Stieltjes' procedure}
Stieltjes proposed a procedure for iteratively constructing a sequence of univariate polynomials that are orthogonal
with respect to a given measure; see~\cite{Gautschi04}. His method exploits the recurrence relationship for the
orthogonal polynomials. 
He observed that with the weighted inner product \eqref{eq:innerproduct}, one may begin with
$\pi_{-1}$ and $\pi_0$, compute $\alpha_0$ and $\beta_1$ from \eqref{eq:alpha} and \eqref{eq:beta}, construct $\pi_1$
from \eqref{eq:monic3term}, compute $\alpha_1$ and $\beta_2$, construct $\pi_2$, and so on. 

A normalized version of Stieltjes' method for computing the orthonormal polynomials and their recurrence coefficients is
given in Algorithm \ref{alg:normstieltjes}. 
The computed $\alpha_i$ from Algorithm \ref{alg:normstieltjes} are equivalent to the expression in \eqref{eq:alpha}, and
the computed $\eta_i$ are equal to $\sqrt{\beta_i}$ in \eqref{eq:beta}.

\begin{algorithm}
\caption{A Stieltjes procedure for computing the first $n$ orthonormal polynomials given a normalized weight function
$\omega(t)$. Let $\pi_{-1}=0$ and $\tilde{\pi}_0=1$.}
\label{alg:normstieltjes}
\begin{algorithmic}
\FOR{$i=0$ to $n-1$}
\STATE $\eta_i=\|\tilde{\pi}_i\|$
\STATE $\pi_i=\tilde{\pi}_i/\eta_i$
\STATE $\alpha_i=(t\pi_i,\pi_i)$
\STATE $\tilde{\pi}_{i+1} = (t-\alpha_i)\pi_i - \eta_i\pi_{i-1}$
\ENDFOR
\end{algorithmic}
\end{algorithm}

Gautschi~\cite{Gautschi04} proposed to use a discrete inner product -- e.g., based on a Gaussian quadrature rule. In the
univariate case ($d=1$), this becomes
\begin{equation}
\label{eq:dinnerprod}
(u,v) \;\approx\; \sum_{j=0}^{m-1} u(\lambda_j)\,v(\lambda_j)\,\nu_j,
\end{equation}
where $\lambda_j$ and $\nu_j$ are the points and weights of the discrete inner product. He reasoned that if the discrete
inner product converges to the continuous, then the recurrence coefficients computed with the discrete inner product
will also converge. Similarly, one may think of the recurrence coefficients computed with the discrete inner product as
approximations of those computed with the continuous inner product.

In Section \ref{sec:setup}, we will use a tensor product quadrature rule to define a discrete 
inner product on the space $\sX$. We will use this inner product in a Stieltjes procedure to compute the recurrence
coefficients of a set of univariate orthogonal polynomials, where orthogonality is with respect to the density function
of $f$ defined on $\sF$. We have chosen to focus the presentation by using the tensor product quadrature rule to define
the discrete inner product. However, the construction can be easily adjusted to employ other discrete inner products.

\subsection{Lanczos method}
Lanczos' method~\cite{Lanczos50} for symmetric matrices is the foundation for iterative eigensolvers and Krylov
subspace methods for solving symmetric linear systems. It generates a symmetric, tridiagonal matrix (the Jacobi
matrix) and a sequence of mutually orthogonal (in exact arithmetic) vectors known as the Lanczos vectors. The
eigenvalues of the tridiagonal matrix -- known as the Ritz values -- approximate the eigenvalues of the symmetric
matrix.

In fact, Algorithm \ref{alg:normstieltjes} is exactly a form of Lanczos' method\footnote{However, Algorithm
\ref{alg:normstieltjes} has undesirable numerical properties as an implementation.}, if we replace (i) the variable $t$
by a symmetric matrix $\mA$ of size $m\times m$, (ii) the polynomials $\pi_i(t)$ by the Lanczos vectors $\vv_i$,
(iii) the starting polynomial $\tilde{\pi}_0$ by a starting vector $\tilde{\vv}_0$, and (iv) the inner product by a
discrete, weighted inner product. 

Suppose that $k$ iterations of the method have been executed. We can write the recurrence relationship for the Lanczos
vectors in matrix notation as
\begin{equation}
\label{eq:lanczosrecur}
\mA\mV \;=\; \mV\mT \,+\, \eta_k\vv_{k}\ve_k^T,
\end{equation}
where $\mV=[\vv_0,\dots,\vv_{k-1}]$ is an $m\times k$ matrix of Lanczos vectors, $\mT$ is the $k\times k$ symmetric,
tridiagonal Jacobi matrix of recurrence coefficients, and $\ve_k$ is a last column of the $k\times k$ identity matrix.
\section{The approximation method}
\label{sec:setup}

In this section, we take advantage of the relationship between an approximate Stietjes' procedure with a discrete inner
product and Lanczos' method to devise a computational method for approximating composite functions. Recall the
problem setup from the introduction:
\begin{equation}
\label{eq:hdef2}
h(\vx)=g(f(\vx)), \qquad \vx=(x_1,\dots,x_d)\in\sX\subset\mathbb{R}^d
\end{equation} 
where
\begin{align}
f&:\sX \;\longrightarrow\; \sF\; \subset\; \mathbb{R}\\
g&:\sF \;\longrightarrow\; \sG\; \subset\; \mathbb{R}.
\end{align}
We restrict our attention to input spaces that are $d$-dimensional hypercubes, $\sX=[-1,1]^d$, although this can be
relaxed to more general hyperrectangles. We assume that $\sX$ is equipped with a positive, separable weight function
$\omega_{\vx}=\omega(\vx)=\omega_1(x_1)\cdots\omega_d(x_d)$ with finite moments. We also assume that $f$ is bounded, so
that $\sF$ is a closed interval in $\mathbb{R}$.

A tensor product pseudospectral approximation of $h$ on the space $\sX$ follows the construction in Section
\ref{sec:tpquad}:
\begin{equation}
\label{eq:pseudospech}
h(\vx) 
\;\approx\;
\sum_{i_1=0}^{n_1-1}\cdots\sum_{i_d=0}^{n_d-1} \hat{h}_{i_1,\dots,i_d}\, \pi_{i_1}(x_1)\cdots\pi_{i_d}(x_d),
\end{equation}
where the $\pi_{i_r}(x_r)$ are the univariate polynomials that are orthogonal with respect to $\omega_r(x_r)$. The
coefficients $\hat{h}_{i_1,\dots,i_d}$ are 
\begin{equation}
\hat{h}_{i_1,\dots,i_d} 
\;=\; 
\sum_{j_1=0}^{n_1-1}\cdots\sum_{j_d=0}^{n_d-1}
h(\blambda_{j_1,\dots,j_d})\,\pi_{i_1}(\lambda_{j_1})\cdots\pi_{i_d}(\lambda_{j_d})
\,\nu_{j_1,\cdots,j_d},
\end{equation}
where $\blambda_{j_1,\dots,j_d}=(\lambda_{j_1},\dots,\lambda_{j_d})$ and
$\nu_{j_1,\cdots,j_d}=\nu_{j_1}\cdots\nu_{j_d}$ are the nodes and weights, respectively, of the tensor product Gaussian
quadrature for the weight function $\omega_{\vx}$. This can be written conveniently in matrix form as in
\eqref{eq:matpseudod} as
\begin{equation}
\label{eq:pseudoh}
\hat{\vh} \;=\; \mQ_{\vx}\mW_{\vx}\vh,
\qquad
h(\vx) \;\approx\; \hat{\vh}^T\bpi(\vx) \;=\; \vh^T\mW_{\vx}^T\mQ_{\vx}^T\bpi(\vx),
\end{equation}
where $\hat{\vh}$ is a vector of the tensor product pseudospectral coefficients; $\vh$ is a vector of evaluations of
$h$ at the quadrature points; $\bpi(\vx)$ is a vector of the product type multivariate orthonormal polynomials; and 
$\mW_{\vx}$ and $\mQ_{\vx}$ represent the $d$-dimensional discrete Fourier transform for functions defined on $\sX$. 

We assume that the primary expense of this computation comes from the evaluations of $h$ at the quadrature points.
There are $m=n_1\cdots n_d$ points in the tensor product quadrature rule; if $n_1=\cdots=n_d=n$, then $m=n^d$. Each
evaluation of $h$ requires an evaluation of $f$ followed by an evaluation of $g$. Our goal is to approximate the values
of $h$ at the quadrature points using $m$ evaluations of $f$, but only $k \ll m$ evaluations of $g$; we accomplish this
goal by taking advantage of the composite structure of $h$. 

Our strategy is to apply Stieltjes'
procedure to implicitly construct a set of polynomials that are orthonormal with respect to the density function of $f$
defined on $\sF$. The $k$-point Gaussian quadrature rule for the univariate density function 
provides a set of points in $\sF$ on which to evaluate $g$. We then linearly map the $k$ evaluations of $g$ to
approximate the $m$ evaluations of $h$. 

Let $\omega_f:\sF\rightarrow \mathbb{R}_+$ be the normalized density function of $f$. We assumed $f$ was bounded on
$\sX$, which implies $\sF$ is a closed interval and $\omega_f$ is bounded. Therefore, $\omega_f$ has finite moments, and
it admits a set of univariate orthonormal polynomials $\phi_i=\phi_i(f)$ with $i=0,1,2,\dots$. Then we can approximate
$g=g(f)$ with a univariate pseudospectral approximation, 
\begin{equation}
g(f) \;\approx\; \sum_{i=0}^{k-1} \hat{g}_i\, \phi_i(f),
\end{equation}
where 
\begin{equation}
\label{eq:hatg}
\hat{g}_i \;=\; \sum_{j=0}^{k-1} g(\theta_j)\,\phi_i(\theta_j)\,\mu_j.
\end{equation}
The $\theta_j\in\sF$ and $\mu_j\in\mathbb{R}_+$ are the nodes and weights, respectively, of the $k$-point Gaussian
quadrature rule for $\omega_f$. In the matrix notation, 
\begin{equation}
\label{eq:mathatg}
\hat{\vg} \;=\; \mQ_f\mW_f\vg,
\qquad
g(f) \;\approx\; \hat{\vg}^T\bphi(f) \;=\; \vg^T\mW_f^T\mQ_f^T\bphi(f),
\end{equation}
where $\hat{\vg}$ is a vector of the pseudospectral coefficients; $\vg$ is a vector of evaluations of
$g$ at the quadrature points $\theta_j$; $\bphi(f)$ is a vector of the univariate orthonormal polynomials; and
$\mW_{f}$ and $\mQ_{f}$ represent the discrete Fourier transform for functions defined on $\sF$.

Notice that if we were given (i) the coefficients $\hat{g}_i$ and (ii) the evaluations of the polynomials $\phi_i$ at
the point $f(\vx)$ with $\vx\in\sX$, then 
\begin{equation}
\label{eq:pseudospecg}
h(\vx) \;=\; g(f(\vx)) \;\approx\; \sum_{i=0}^{k-1} \hat{g}_i\, \phi_i(f(\vx)).
\end{equation}
Unfortunately, the direct evaluation of this expression is problematic, since each new point $\vx$ requires the
computation of $f(\vx)$. We would prefer a surrogate like \eqref{eq:pseudospech} whose evaluation is independent of
both $g$ and $f$. 
With this in mind, we seek to evaluate \eqref{eq:pseudospecg} at only the quadrature points $\blambda_{i_1,\dots,i_d}$
on $\sX$:
\begin{equation}
\label{eq:htilde}
h(\blambda_{j_1,\dots,j_d}) \;\approx\; \underline{h}(\blambda_{j_1,\dots,j_d}) \;=\; 
\sum_{i=0}^{k-1} \hat{g}_i\,\phi_i(f(\blambda_{j_1,\dots,j_d})).
\end{equation}
We denote the output of the pseudospectral expansion \eqref{eq:htilde} by $\underline{h}$. We write \eqref{eq:htilde} in
matrix notation as
\begin{equation}
\label{eq:mathtilde}
\vh \;\approx\; \underline{\vh} \;=\; \mU\hat{\vg}.
\end{equation}
The elements of the $m\times k$ matrix $\mU$ are the evaluations of $\phi_i(f(\blambda_{i_1,\dots,i_d}))$,
where each row of $\mU$ corresponds to a quadrature point $\blambda_{i_1,\dots,i_d}$ and each column
corresponds to a polynomial $\phi_i(f)$. This matrix enables a construction similar to \eqref{eq:pseudospech}.

Putting these ideas together, we construct a global polynomial surrogate for $h(\vx)$ that approximates the
tensor product pseudospectral approximation. The form of the polynomial surrogate is the same as the true pseudospectral
approximation -- i.e., a linear combination of the multivariate orthogonal basis polynomials -- but the
pseudospectral coefficients are approximated using $k\ll m$ evaluations of $g$ plus the cost of computing the polynomial
evaluations $\phi_i(f(\blambda_{i_1,\dots,i_d}))$. We will denote the approximate coefficients by
$\doublehat{h}\approx\hat{h}$, which is consistent with the notation $\underline{h}\approx h$. 

More precisely, we construct a surrogate for $h(\vx)$ as
\begin{equation}
h(\vx) 
\;\approx\;
\sum_{i_1=0}^{n_1-1}\cdots\sum_{i_d=0}^{n_d-1} \doublehat{h}_{i_1,\dots,i_d}\, \pi_{i_1}(x_1)\cdots\pi_{i_d}(x_d),
\end{equation}
where
\begin{equation}
\doublehat{h}_{i_1,\dots,i_d} 
\;=\; 
\sum_{j_1=0}^{n_1-1}\cdots\sum_{j_d=0}^{n_d-1}
\underline{h}(\blambda_{j_1,\dots,j_d})\,\pi_{i_1}(\lambda_{j_1})\cdots\pi_{i_d}(\lambda_{j_d})
\,\nu_{j_1,\cdots,j_d}.
\end{equation}
The quantities $\underline{h}(\blambda_{j_1,\dots,j_d})$ are given by \eqref{eq:htilde}. In matrix notation, we combine
\eqref{eq:pseudoh}, \eqref{eq:mathatg}, and \eqref{eq:mathtilde} to get
\begin{align}
h(\vx) &\approx \doublehat{\vh}^T\bpi(\vx)\\
 &= \underline{\vh}^T\mW_{\vx}^T\mQ_{\vx}^T\bpi(\vx)\\
 &= \hat{\vg}^T\mU^T\mW_{\vx}^T\mQ_{\vx}^T\bpi(\vx)\\
 &= \vg^T\mW_f^T\mQ_f^T\mU^T\mW_{\vx}^T\mQ_{\vx}^T\bpi(\vx)\label{eq:billy}, 
\end{align}
where $\doublehat{\vh}$ is a vector of the coefficients $\doublehat{h}_{i_1,\dots,i_d}$, and $\underline{\vh}$ is a vector
of the evaluations of $\underline{h}$.  Notice that $\vg$ contains $k$ evaluations of $g$. In what follows, we will show
how to compute the points $\theta_j$ in \eqref{eq:hatg} and the transformations $\mU$, $\mQ_f$, and
$\mW_f$ from \eqref{eq:billy} using $m$ evaluations $f(\blambda_{j_1,\dots,j_d})$. Thus, we will approximate the tensor
product pseudospectral approximation \eqref{eq:pseudospech} using $k$ evaluations of $g$ and $m$ evaluations of $f$.

\subsection{Lanczos' method for approximation}
The strategy is implemented computationally with Lanczos' method applied to a diagonal matrix whose nonzero
elements are the $m$ evaluations of $f$ at the quadrature points on $\sX$; the $k$-point Gaussian quadrature rule on
$\sF$ comes from the computed Jacobi matrix (see \eqref{eq:eigJ}), and the map from $k$ evaluations of $g$ to $m$
evaluations of $h$ comes from the Lanczos vectors.

It is notationally convenient in this section to order the $d$-dimensional quadrature points
$\blambda_{i_1,\dots,i_d}\in\sX$ so as to be indexed by the natural numbers. For the remainder, we will refer to a
node as $\blambda_i$ with corresponding weight $\nu_i$ for $i=0,\dots,m-1$. The specific ordering must be consistent
with the tensor product structure of \eqref{eq:krondft}. We will similarly order the function evaluations
$f_i=f(\blambda_i)$.

We first show that Lanczos' method yields the quantities desired from the Stieltjes procedure. The fact has been
observed elsewhere in the literature~\cite{Gragg84,Greenbaum97,Oleary07,Golub10}; we state it as a theorem for reference
and notation.

\begin{theorem}
\label{thm:lanczos}
Let
\begin{equation}
\mA=\bmat{f_0 & & \\ & \ddots & \\ & & f_{m-1}}
\end{equation}
be the diagonal matrix whose nonzero elements are the evaluations of $f$ at the quadrature nodes
$\blambda_i\in\sX$. For an $m$-vector of ones $\ve$ and a starting vector $\tilde{\vv}_0=\mW_{\vx}\ve$,
Lanczos method applied to $\mA$ is equivalent to Stieltjes' procedure with a discrete inner product to construct the 
recurrence coefficients of the polynomials $\phi_i(f)$ that are orthonormal with respect to an approximation of the
measure $\omega_f$. The discrete inner product is defined by the nodes $\blambda_i$ and weights $\nu_i$ of the tensor
product Gaussian quadrature rule for the measure $\omega_{\vx}$.
\end{theorem}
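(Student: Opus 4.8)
The plan is to exhibit an explicit dictionary between the quantities generated by Lanczos' method on $\mA$ and those generated by the discrete Stieltjes procedure (Algorithm \ref{alg:normstieltjes} with a discrete inner product) for $\omega_f$, and then to verify by induction that the two iterations produce identical recurrence coefficients. The crucial observation is a change of variables: since $\omega_f$ is the pushforward of $\omega_{\vx}$ under $f$, for single-variable polynomials $u,v$ one has $\int_\sF u(f)\,v(f)\,\omega_f(f)\,df = \int_\sX u(f(\vx))\,v(f(\vx))\,\omega(\vx)\,d\vx$, and applying the tensor product Gaussian quadrature rule for $\omega_{\vx}$ to the right-hand side produces the discrete inner product $(u,v)_f = \sum_{j=0}^{m-1} u(f_j)\,v(f_j)\,\nu_j$. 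This discrete inner product is the approximation of the measure $\omega_f$ referenced in the statement, and it is the object I will match against Lanczos.

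First I would introduce the vector $\bphi_i = [\phi_i(f_0),\dots,\phi_i(f_{m-1})]^T$ of evaluations of the $i$th discrete-orthonormal polynomial at the mapped nodes $f_j = f(\blambda_j)$, and claim that the $i$th Lanczos vector satisfies $\vv_i = \mW_{\vx}\bphi_i$. Under this identification the Euclidean inner product of two Lanczos vectors reproduces the discrete inner product exactly, since $\vv_i^T\vv_j = \bphi_i^T\mW_{\vx}^2\bphi_j = \sum_{l=0}^{m-1}\nu_l\,\phi_i(f_l)\,\phi_j(f_l) = (\phi_i,\phi_j)_f$; hence Euclidean orthonormality of the Lanczos vectors is precisely orthonormality of the $\phi_i$ in the approximate measure. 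For the base case, the starting vector $\tilde{\vv}_0 = \mW_{\vx}\ve$ corresponds to the constant polynomial $\phi_0$, and its Euclidean norm equals $(\sum_j \nu_j)^{1/2} = 1$ because the weights of the normalized rule sum to one, matching $\eta_0$ from the algorithm.

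The inductive step rests on the diagonality of $\mA$: multiplication by $\mA$ acts entrywise by $f_j$, so $\mA\vv_i = \mW_{\vx}(f\cdot\bphi_i)$, where $f\cdot\bphi_i$ has entries $f_j\,\phi_i(f_j)$ and represents the polynomial $t\,\phi_i(t)$ sampled at the nodes. Consequently the Lanczos coefficient $\alpha_i = \vv_i^T\mA\vv_i = \sum_j f_j\,\phi_i(f_j)^2\,\nu_j = (t\phi_i,\phi_i)_f$ coincides with the Stieltjes $\alpha_i$, and factoring the invertible $\mW_{\vx}$ out of the three-term Lanczos recurrence reduces it to the discrete Stieltjes recurrence $\eta_{i+1}\phi_{i+1}(f_j) = f_j\,\phi_i(f_j) - \alpha_i\,\phi_i(f_j) - \eta_i\,\phi_{i-1}(f_j)$, forcing the off-diagonal coefficients $\eta_i = \|\tilde{\vv}_i\|_2$ to agree as well. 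I would then conclude that the Jacobi matrix $\mT$ of \eqref{eq:lanczosrecur} is exactly the matrix of recurrence coefficients $\{\alpha_i,\eta_i\}$ produced by discrete Stieltjes, so the two procedures are equivalent.

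The step I expect to be the main obstacle is not any single computation but keeping the two normalizations synchronized: the factor $\mW_{\vx}$ must convert the Euclidean inner product into the weighted discrete inner product at every stage, and one must check that the normalizing divisions $\vv_i = \tilde{\vv}_i/\eta_i$ and $\phi_i = \tilde{\pi}_i/\eta_i$ stay aligned throughout the induction rather than only at the first step. Once the identification $\vv_i = \mW_{\vx}\bphi_i$ is shown to be preserved by a single step of each iteration, equivalence of all recurrence coefficients follows immediately.
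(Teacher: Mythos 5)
Your proposal is correct and follows essentially the same route as the paper's proof: the key identification $\vv_i = \mW_{\vx}\bphi_i$ (equivalently, $\vv_i(j) = \sqrt{\nu_j}\,\phi_i(f_j)$) is exactly the dictionary the paper writes down, with $\alpha_i = \vv_i^T\mA\vv_i$ matching the discrete Stieltjes coefficient and the scaled recurrence $\tilde{\vv}_{i+1} = (\mA-\alpha_i\mI)\vv_i - \eta_i\vv_{i-1}$ recovering the polynomial recurrence. Your version is somewhat more complete than the paper's -- you make the induction explicit, verify the base case via $\sum_j \nu_j = 1$, and spell out the pushforward interpretation of the discrete inner product -- but these are elaborations of the same argument rather than a different approach.
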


\begin{proof}
To prove this statement, we simply describe the quantities in Algorithm \ref{alg:normstieltjes} with the discrete inner
product. The starting polynomial $\tilde{\phi}_0=1$ corresponds to an $m$-vector of ones, $\ve$. Let $\vv_{-1}$ be an
$m$-vector of zeros. Then the quantities from Algorithm \ref{alg:normstieltjes} with the discrete inner product become
\begin{align*}
\eta_i &= \left(\sum_{j=0}^{m-1} \tilde{\phi}_i(f_j)^2\, \nu_j\right)^{1/2}
\;=\; \left(\tilde{\vv}_i^T\tilde{\vv}_i\right)^{1/2}\\
\sqrt{\nu_j}\phi_i(f_j) &= \frac{\tilde{\phi}_i(f_j)}{\eta_i},
\quad
\vv_i=[\sqrt{\nu_0}\phi_i(f_0),\dots, \sqrt{\nu_{m-1}}\phi_i(f_{m-1})]^T,\\
\alpha_i &= \sum_{j=0}^{m-1} f_j\,\phi_i(f_j)^2 \nu_j
\;=\; \vv_i^T\mA\vv_i,\\
\tilde{\phi}_{i+1}(f_j) &= (f_j\,-\,\alpha_i)\,\phi_i(f_j) - \eta_i\,\phi_{i-1}(f_j),
\end{align*}
which can be written
\begin{equation}
\tilde{\vv}_{i+1} = (\mA-\alpha_i\mI)\vv_i - \eta_i\vv_{i-1},
\end{equation}
with $\tilde{\vv}_{i+1}=[\tilde{\phi}_{i+1}(f_0),\dots,\tilde{\phi}_{i+1}(f_{m-1})]^T$. To recover the
polynomials, 
\begin{equation}
\label{eq:scaledpolys}
\mU=\mW_{\vx}^{-1}\mV,
\end{equation}
where $\mU(j,i)=\phi_i(f_j)$.  
\end{proof}

The matrix $\mU$ from \eqref{eq:scaledpolys} is the same as the one from \eqref{eq:mathtilde}. Note that -- as mentioned
in Theorem \ref{thm:lanczos} -- these quantities do not correspond to the exact measure $\omega_f$. They instead
correspond to an approximation of $\omega_f$ from the evaluations $f_j=f(\blambda_j)$. We will not examine the error
made in this approximation; we assume the evaluations of $f$ at the nodes $\blambda_j\in\sX$ are sufficient to resolve
the salient features of $\omega_f$.

Running $k$ steps of the Lanczos process yields the recurrence relationship \eqref{eq:lanczosrecur}.
The elements of the $k\times k$ tridiagonal Jacobi matrix $\mT$ are the recurrence coefficients of the polynomials
$\phi_i(f)$ up to order $k-1$. The Lanczos vectors $\mV$ contain the polynomial evaluations $\phi_i(f_j)$ scaled by
$\sqrt{\nu_j}$ as in \eqref{eq:scaledpolys}.

Denote the eigendecomposition of $\mT$ by
\begin{equation}
\label{eq:eigT}
\mT = \mQ_f\Theta\,\mQ_f^T, \qquad \Theta=\mathrm{diag}([\theta_0,\dots,\theta_{k-1}]).
\end{equation}
The Ritz values (the eigenvalues of $\mT$) are the Gaussian quadrature nodes for the approximation of $\omega_f$ on the 
space $\sF$, and the weights come from the first component of the eigenvectors of $\mT$ as in \eqref{eq:qweights}.
Precisely speaking, the $\phi_i(f)$ are orthogonal with respect to the discrete measure defined by the $\theta_j$ and
$\mu_j$.

Therefore, we compute the quantities $\theta_j$, $\mu_j$, $\mQ_f$, $\mW_f$, and $\mU$ from \eqref{eq:billy} using $k$
steps of Lanczos' method applied to the diagonal matrix $\mA$ followed by the eigendecomposition of the Jacobi matrix
$\mT$. This is the computational cost incurred beyond the $m$ evaluations of $f$ and $k$ evaluations of $g$.

\subsection{Loss of orthogonality and stopping criteria}
We have stated that we expect $k\ll m$, or that the number of points in the discrete measure on $\sF$ will be much
smaller than the number of points in the discrete measure on $\sX$. The number $k$ is the number of iterations of the
Lanczos procedure; how do we know how many iterations to use to get an accurate approximation of the measure on $\sF$? 

It is well known that Lanczos' method in finite precision behaves differently than the algorithm in exact arithmetic; a
thorough treatment of this subject can be found in Meurant's excellent monograph~\cite{Meurant06_2}, as well
as~\cite{Meurant06}.
In particular, the Lanczos vectors will often lose orthogonality after some number of iterations.

Thanks to the work of Paige~\cite{Paige71} as described in~\cite{Meurant06_2} -- as well
as~\cite{Parlett98,Greenbaum89,Greenbaum92} -- we know that the loss of orthogonality is closely related to the
convergence of the Ritz values to the true eigenvalues; loosely speaking, once a Ritz value has converged to an
eigenvalue, the remaining Lanczos vectors lose orthogonality. It has been observed that in many cases the extremal Ritz
values converge to the extremal eigenvalues fastest depending on the starting vector. From this we can expect that the
Lanczos vectors will lose orthogonality once the extremal Ritz values are sufficiently close to the extremal
eigenvalues. We use this expectation to motivate a heuristic for stopping the Lanczos iteration. Further justification
of the following heuristic is the subject of on-going work.

Recall that $\mA$ is diagonal, so we are not concerned with any particular eigenvalue. In fact, we are only concerned
with approximating the range of the data -- which is the range of the function $f(\vx)$ evaluated at the points
$\blambda_i$ -- and its corresponding measure. Therefore, once the extremal Ritz values converge, we are satisfied.
Leveraging the work on Lanczos' method in finite precision, we can judge when the extremal Ritz values have converged
by checking orthogonality of the Lanczos vectors. Essentially, we can treat the loss of orthogonality in the Lanczos
vectors as stopping criteria. We use the following measure of loss of orthogonality given a tolerance TOL:
\begin{equation}
\label{eq:stop}
\tau\;=\;\log_{10}\big( \| \mI - \mV^T\mV \|_F \big)\;>\;\mathrm{TOL}
\end{equation}
where $\|\cdot\|_F$ is the Frobenius norm. Other efficient measures for determining loss of orthogonality are discussed
in~\cite[Chapter 9]{GVL96} as well as~\cite{Simon84,Simon1984_2}. In the numerical examples of Section
\ref{sec:examples}, we choose TOL=-14.

If the iterations continue beyond this point, we find that the points and weights of the quadrature rule for the measure
on $\sF$ become less smooth; this phenomenon is similar to choosing the wrong bin size for a histogram. In some cases,
we observe the familiar (to those who have studied Lanczos' method) appearance of ghost eigenvalues. If we examine
the weights corresponding to pairs of nearly identical Ritz values, we usually find that one of the weights is
orders of magnitude smaller than the other. Of course, we would prefer to ignore points with very small weights, since
this would correspond to a wasted function evaluation in the quadrature approximations. We demonstrate these phenomena
on the following numerical examples.

\subsection{An Algorithm}
We close this section with a step-by-step algorithm using the linear algebra notation to summarize the procedure. Given
functions $f(\vx)$ and $g(f)$, the goal is to approximate the coefficients of a tensor product pseudospectral
polynomial surrogate for the composite function $h(\vx)=g(f(\vx))$.  
\begin{enumerate}
  \item Obtain the $m$ nodes $\blambda_i$ and weights $\nu_i$ of the tensor product Gaussian quadrature rule for the
  space $\sX$. Also, obtain the $m\times m$ matrix $\mQ_{\vx}$ and the diagonal matrix of the square root of the
  quadrature weights $\mW_{\vx}$ from \eqref{eq:matpseudod}\footnote{In a real implementation, $\mQ_{\vx}$ and
  $\mW_{\vx}$ do not need to be formed explicitly. We only need the action of $\mQ_{\vx}$ on a vector, which can be
  computed efficiently using methods such as~\cite{Fernandes98}.}.
  \item For $i=0,\dots,m-1$, compute $f_i=f(\blambda_i)$, and form the diagonal matrix
  $\mA=\mathrm{diag}\,([f_0,\dots,f_{m-1}])$.
  \item For the starting vector $\tilde{\vv}_0=\mW_{\vx}\ve$, run Lanczos' method until $\tau>\mathrm{TOL}$ from
  \eqref{eq:stop}, and let $k$ be the number of iterations. Store the matrix $\mU$ from \eqref{eq:scaledpolys}.
  \item Compute the eigenvalue decomposition of the $k\times k$ Jacobi matrix $\mT$ from the Lanczos procedure to get
  the quadrature nodes $\{\theta_k\}$ and the discrete Fourier transform matrices $\mQ_f$ and $\mW_f$; see
  \eqref{eq:eigT}.
  \item For $i=0,\dots,k-1$, compute $g_i=g(\theta_i)$ and form the vector $\vg=[g_0,\dots,g_{k-1}]^T$.
  \item Compute approximate coefficients for the pseudospectral approximation $\doublehat{\vh}\approx\hat{\vh}$ as
  \begin{equation}
  \doublehat{\vh} \;=\; \mQ_{\vx}\mW_{\vx}\mU\,\mQ_f\mW_f\vg. 
  \end{equation}
\end{enumerate}
These coefficients define a polynomial approximation of $h(\vx)$ with a basis of multivariate product-type orthonormal
polynomials.

\section{Numerical Examples}
\label{sec:examples}

We present two numerical studies demonstrating the qualities of the method. The first is an example with 
functions chosen to stress the method's properties. The second applies the method -- as a proof of concept -- to a
model from fluid dynamics with a scalar input parameter that depends on multiple physical quantities.

\subsection{Simple functions}
Let $\vx=(x_1,x_2)\in [-1,1]^2$ with a uniform measure of $1/4$ in $[-1,1]^2$ and zero otherwise.
Given parameters $\delta_1>1$ and $\delta_2>1$, define the function
\begin{equation}
f(\vx) = \frac{1}{(x_1-\delta_1)(x_2-\delta_2)}.
\end{equation}
Notice that $f(\vx)>0$, and $\delta_1$ and $\delta_2$ determine how quickly $f$ grows near the boundary. The closer
$\delta_1$ and $\delta_2$ are to 1, the closer the singularity in the function gets to the domain, which determines how
large $f$ is at the point $(x_1=1,x_2=1)$. For the numerical experiments, we choose $\delta_1=\delta_2=1.3$. The
function $f$ is analytic in $\vx$, so we expect polynomial approximations to converge exponentially as the degree of
approximation increases.

Next we choose $g(f)=\exp(f)$, so that
\begin{equation}
h(\vx) \;=\; g(f(\vx)) \;=\; \exp\left(\frac{1}{(x_1-\delta_1)(x_2-\delta_2)}\right).
\end{equation}
Again, $g(f)$ is analytic in $f$, so $h(\vx)$ is analytic in $\vx$, as well. 

We choose the discrete measure on $\sX$ to be a tensor product Gauss-Legendre quadrature rule on $[-1,1]^2$ with $n$
points in each variable, which results in $m=n^2$ points and weights. The $m\times m$ diagonal matrix $\mA$ has
diagonal elements equal to $f$ evaluated at the points of the discrete measure. 

To test the approximation properties of the Lanczos-enabled method, it is sufficient to examine the error at the tensor
product quadrature points; see \eqref{eq:htilde}. This is essentially the same as computing the difference between the
true pseudospectral coefficients $\hat{h}$ and their approximation $\underline{\hat{h}}$. 
For the number $m$ of bivariate nodes of the tensor product quadrature rule and the number $k$ of Lanczos
iterations, define the error $\sE=\sE(m,k)$ as
\begin{align}
\label{eq:lerror}
\sE^2 &= \sum_{i=0}^{m-1} \big( h(\blambda_i) - \sum_{j=0}^{k-1} \hat{g}_j\,\phi_j(f(\blambda_i)) \big)^2 \\  
 &= \|\vh - \mU\hat{\vg}\|_2^2.
\end{align}
In Figure \ref{fig:dmeas}, we plot both $\log_{10}(\sE)$ and the measure of orthogonality of the Lanczos vectors
(see \eqref{eq:stop}) as $m$ and $k$ increase. To read these plots, choose $m$ from the $y$-axis, and then
follow the plot to the right to increase the Lanczos iteration $k$. It is interesting to note that the error in the
approximation does not increase after the Lanczos vectors lose orthogonality. The loss of orthogonality is useful for a
stopping criteria to determine the smallest $k$ that produces an accurate approximation. However, taking more than the
minimum number of Lanczos iterations does no harm for this example.

\begin{figure}%
 \subfloat[Error]{\label{fa}
 \includegraphics[scale=0.35]{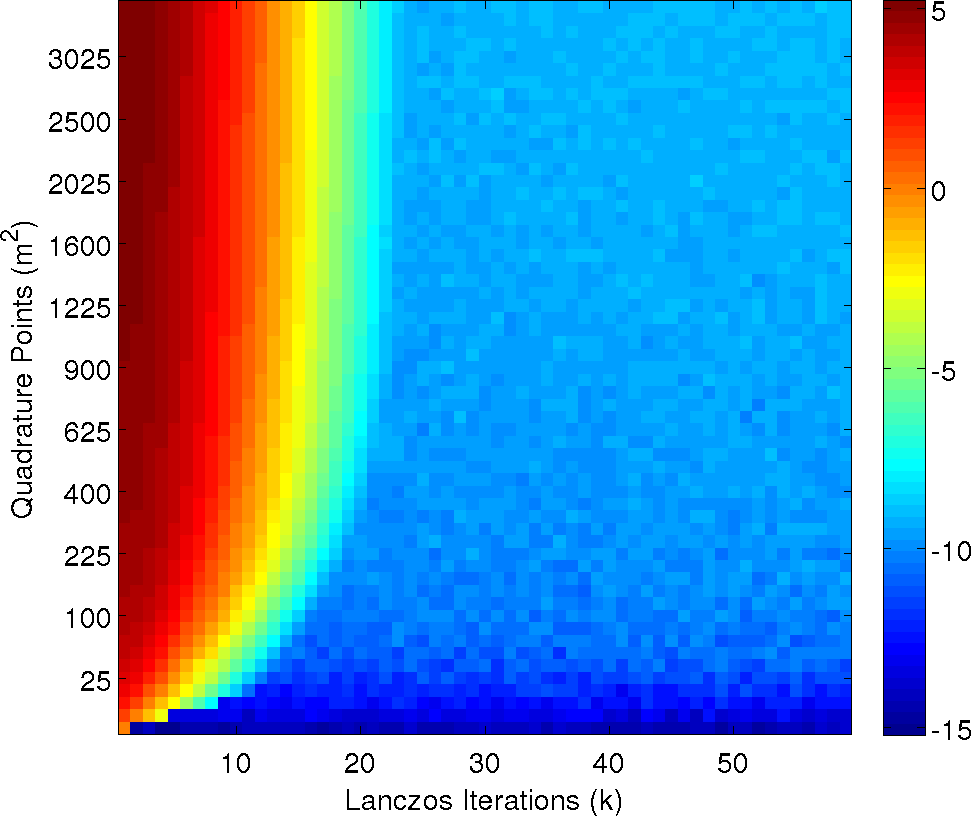}
 }
 \quad
 \subfloat[Orthogonality]{\label{fb}
 \includegraphics[scale=0.35]{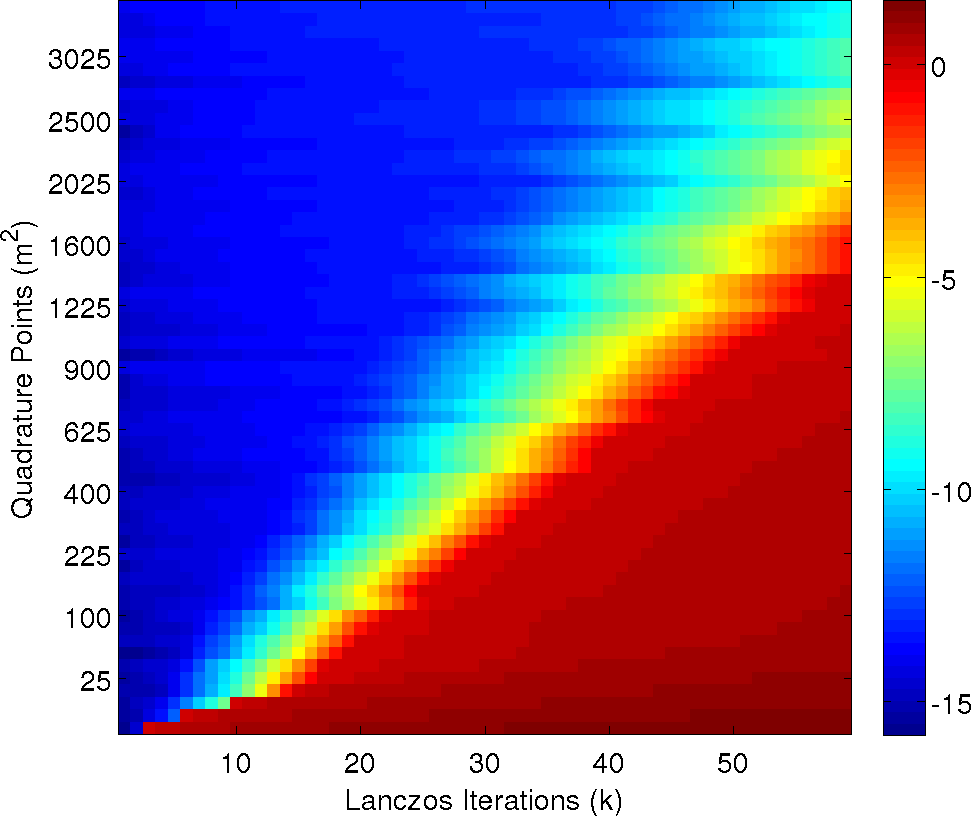}
 }
 \caption{Figure~\ref{fa} plots the error in approximation of $h$ as measured by \eqref{eq:lerror}. Figure~\ref{fb}
 shows the loss of orthogonality in the Lanczos vectors using $\tau$ from \eqref{eq:stop}.} 
 \label{fig:dmeas}
\end{figure}

In Figure \ref{fig:bv1}, we plot a series of bar graphs of the quadrature weights $\mu_l$ at points $\theta_l$ for the
measure on $\sF$ computed with $m=81$.
While the bar plot resembles a histogram, the comparison between a histogram and quadrature weights
is not precise. Nevertheless, the series of bar plots demonstrates the behavior of the weights as the Lanczos iteration
index continues beyond the point when the Lanczos vectors lose orthogonality; the orthogonality measures from
\eqref{eq:stop} are presented in each plot. We observe that the weights lose smoothness as the Lanczos vectors lose
orthogonality; note the weights in the right tail of the plot. 

\begin{figure}%
\centering
\subfloat[$k=5$]{
\includegraphics[scale=0.35]{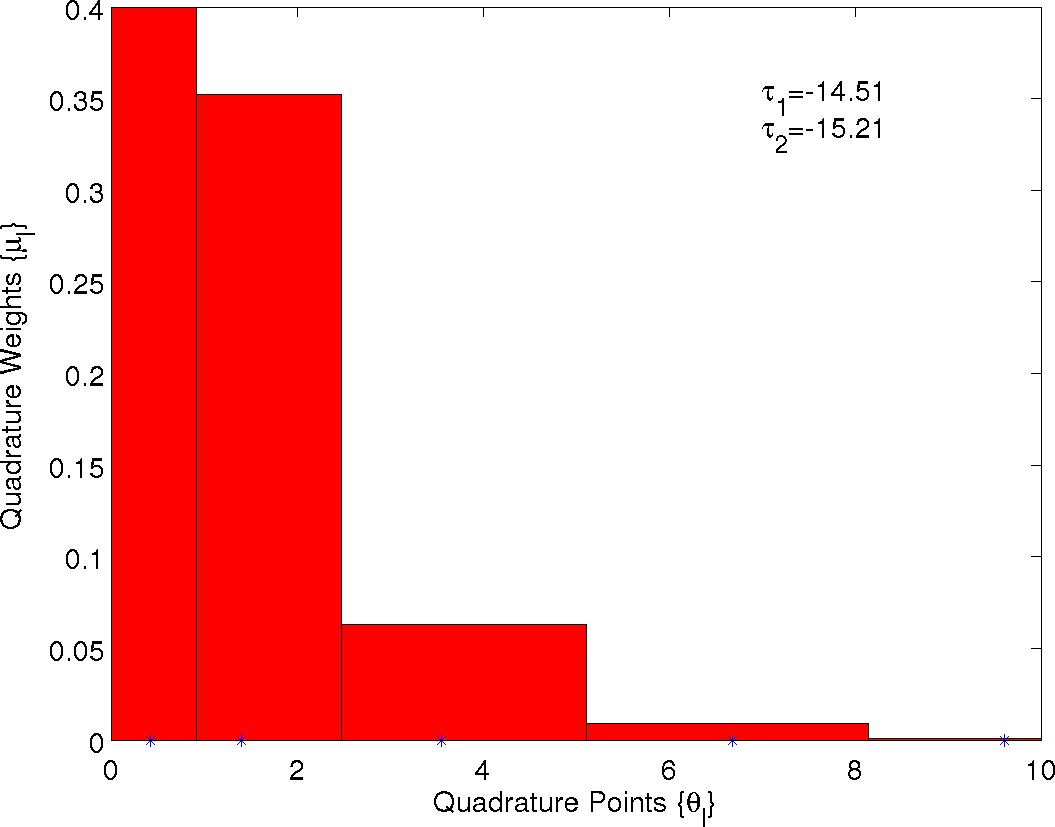}
}
\quad
\subfloat[$k=10$]{
\includegraphics[scale=0.35]{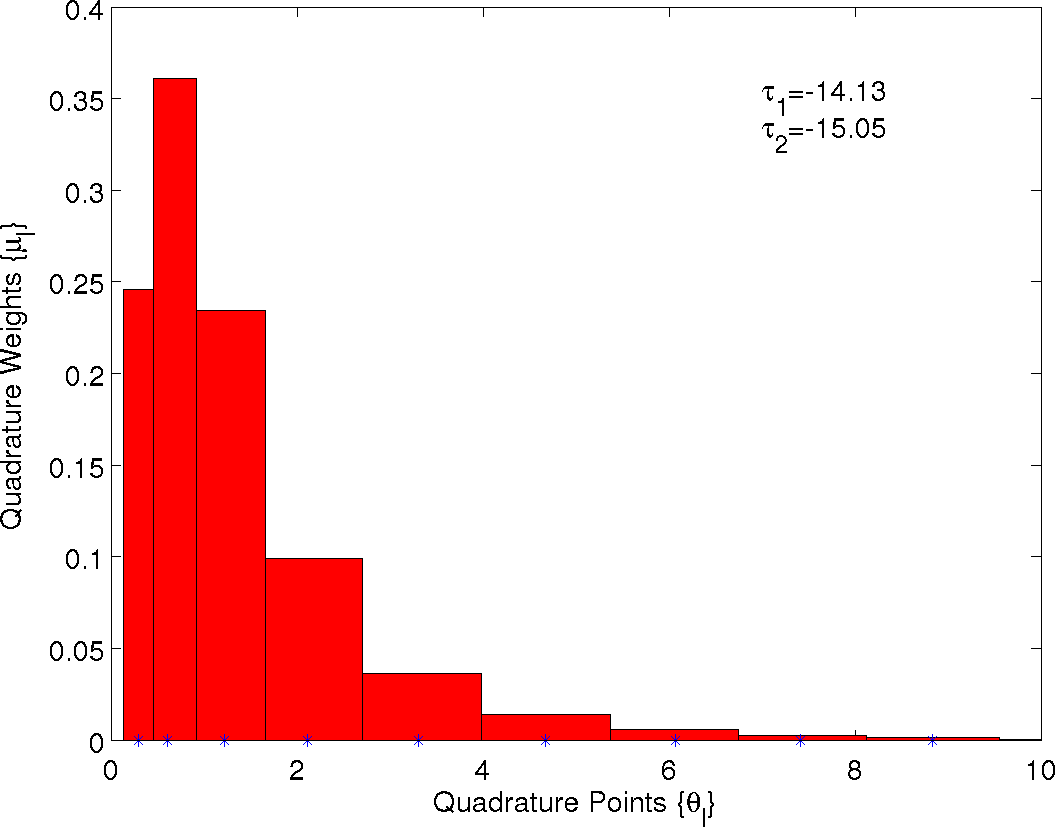}
}\\
\subfloat[$k=15$]{
\includegraphics[scale=0.35]{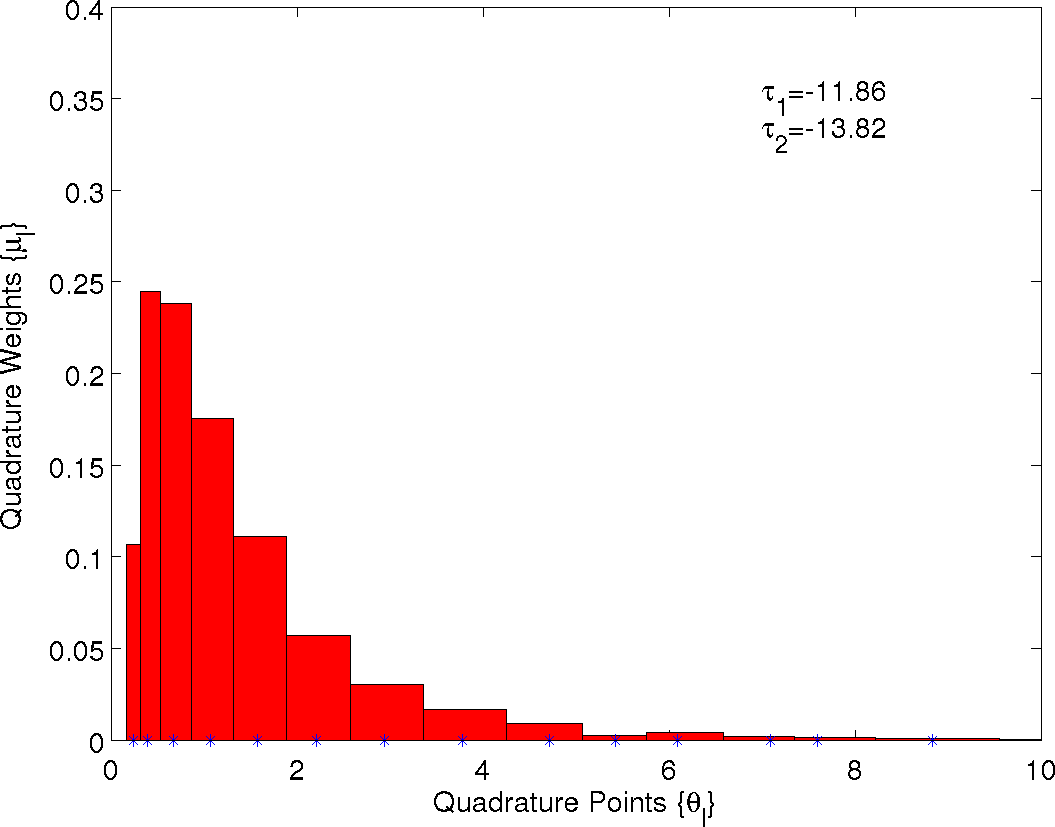}
}
\quad
\subfloat[$k=20$]{
\includegraphics[scale=0.35]{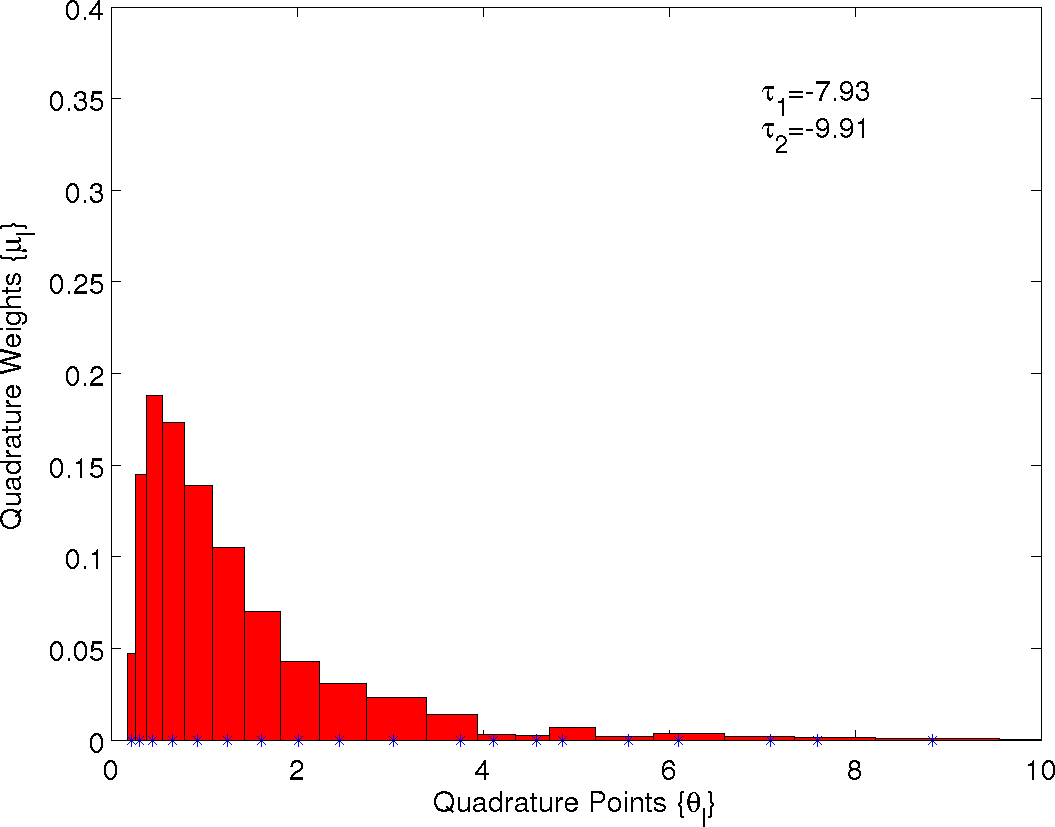}
}
\caption{A series of bar plots showing the weights $\mu_l$ at the points $\theta_l$ for the quadrature rule on $\sF$.
The number $\tau$ in each plot shows the measure of orthogonality in the Lanczos vectors; it is
defined in \eqref{eq:stop}.}
\label{fig:bv1}
\end{figure}

\subsection{Fluid flow example}

As an example of applying these techniques to an engineering problem, we examine a
simple channel flow problem with a scalar input parameter (the Reynolds number) that depends on multiple physical
quantities (density and viscosity). Consider the two-dimensional rectangular domain of length $L = 1$ m and width $W =
0.1$ m shown in Figure~\ref{fig:flow_domain}. Water flows into the left side of the domain with a horizontal velocity
of $u_0=0.01$ m/sec, and we are interested in computing the velocity of the flow out of the domain on the right side.
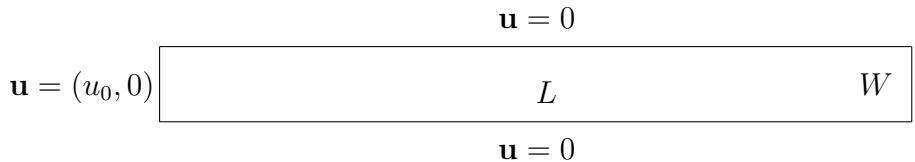
\begin{figure}
\begin{center}
\setlength{\unitlength}{1cm}
\begin{picture}(10,5)
\put(0,0.5){\line(1,0){10}}
\put(10,0.5){\line(0,1){1}}
\put(0,0.5){\line(0,1){1}}
\put(0,1.5){\line(1,0){10}}
\put(4.5,0){$\vu=0$}
\put(4.5,1.75){$\vu=0$}
\put(-2.0,0.85){$\vu = (u_0,0)$}
\put(9.3,0.85){$W$}
\put(5,0.75){$L$}
\end{picture}
\end{center}
\caption{Fluid flow domain}\label{fig:flow_domain}
\end{figure}

At room temperature and standard pressure, the dynamics of the fluid within the domain are well-modeled by the
incompressible Navier-Stokes equations
\begin{align}
  \rho\frac{\partial\vu}{\partial t} + \rho(\vu\cdot\nabla\vu) - \mu\nabla^2\vu + \nabla P &= 0, \\
  \rho(\nabla\cdot\vu) &= 0,
\end{align}
where $\vu$ is the two-component velocity of the fluid, $\rho$ is the density, $\mu$ is the viscosity, and $P$ is the
pressure.  Using the inlet flow velocity and the width $W$ of the domain, the equations are non-dimensionalized
resulting in
\begin{align}
  \frac{\partial\bar{\vu}}{\partial\bar{t}} + \bar{\vu}\cdot\bar{\nabla}\bar{\vu} - \frac{1}{Re}\bar{\nabla}^2\vu +
  \bar{\nabla}\bar{P} &= 0,\label{eq:ns_mom_nondim} \\
  \bar{\nabla}\cdot\bar{\vu} &= 0,\label{eq:ns_cont_nondim}
\end{align}
where 
$\bar{\nabla} = \nabla/W$, and
\begin{equation}
  Re = \frac{\rho u_0 W}{\mu}
\end{equation}
is the Reynolds number.  

Equations~\ref{eq:ns_mom_nondim}-\ref{eq:ns_cont_nondim} are discretized spatially on a mesh of 500 by 50 quadrangle
cells using the finite element method with piecewise bilinear basis functions for both the velocities and
pressures~\cite{Shadid:2006p7554}. Given a Reynolds number, the resulting nonlinear algebraic equations are solved via
Newton's method using a GMRES linear solver~\cite{Saad1996} and incomplete-LU factorization preconditioner.  The
resulting flow solution at density $\rho=\rho_0=998.205 \;\mathrm{km/m}^3$ and viscosity
$\mu=\mu_0=0.001001\;\mathrm{Ns/m}^2$ is shown in Figure~\ref{fig:flow_solution}; the density and viscosity values
roughly correspond to water at room temperature and standard pressure. The calculations were implemented in the
Albany~\cite{Albany} simulation package using numerous solver and discretization packages from the Trilinos
framework~\cite{TrilinosTOMS}.

\begin{figure}
\begin{center}
\includegraphics[scale=0.35]{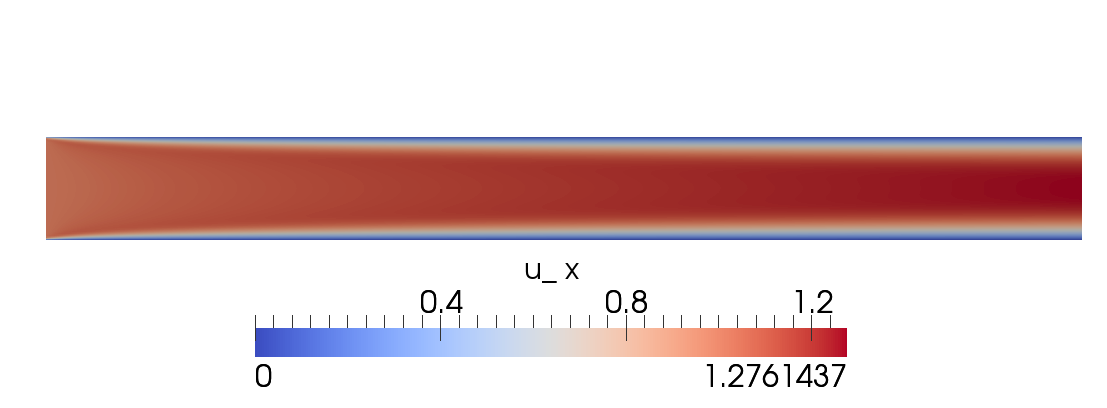}
\end{center}
\caption{Horizontal flow velocity at mean density and viscosity}\label{fig:flow_solution}
\end{figure}

We consider a problem where the ambient temperature and pressure are uncertain resulting in
uncertain density and viscosity. In particular we model the density and viscosity as uniformly distributed random
variables
\begin{equation}
  \rho \in [0.99\rho_0,\,1.01\rho_0] \qquad
  \mu \in [0.9\mu_0,\,1.1\mu_0].
\end{equation}
In other words, we assume density varies uniformly by 1\% and viscosity varies uniformly by 10\%. In the notation of
Section \ref{sec:setup}, we have 
\begin{align*}
\vx &= (\rho,\mu),\\
f(\vx) &= \frac{1}{Re} \;=\; \frac{\mu}{\rho u_0 W}. 
\end{align*}
The function $h(\vx)=g(f(\vx))$ corresponds to the maximum outflow velocity at the right side of the domain given fixed
values for $\rho$ and $\mu$. Each evaluation of $g$ involves an expensive solution of
equations~\ref{eq:ns_mom_nondim}-\ref{eq:ns_cont_nondim} -- compared to computing $f(\vx)$.

For this experiment, we choose a tensor product Gauss-Legendre quadrature rule with 11 points in the range of $\rho$ and
11 points in the range of $\mu$ for a total of $m=121$ points. We use the procedure from Section \ref{sec:setup} to
approximate the maximum outflow velocity at all 121 pairs of $(\rho,\mu)$ by constructing a $k$-point Gaussian
quadrature rule for $1/Re$ with $k=13$. In other words, with only 13 evaluations of $g$ -- the expensive flow solver --
we can approximate the output $h$ at 121 points in the parameter space corresponding to $\vx$. 

To check the error in the approximation, we also compute the maximum outflow velocity at all 121 combinations of $\rho$
and $\mu$, which enables the computation of \eqref{eq:lerror}. With 13 steps of the Lanczos procedure, we have a loss of
orthogonality in the basis vectors of $\tau=-13.14$ (see equation \eqref{eq:stop}). The error in approximation
(equation \eqref{eq:lerror}) is $1.55\times 10^{-6}$.

\section{Conclusion}

We have presented a method for approximating a composite function by implicitly approximating the outer function as a
polynomial of the output of the inner function. This measure transformation is based on Stieltjes' method for generating
orthogonal polynomials given an inner product, and it is implemented as Lanczos' method on a diagonal matrix of
inner function evaluations at the points of a discrete measure. We
have developed a heuristic for when to terminate the Lanczos iteration based on the loss of orthogonality in the Lanczos
vectors -- a common phenomenon for the algorithm in finite precision. The resulting method reduces the number of
evaluations of the outer function, which are only required at the Gaussian quadrature points of the transformed measure.
The numerical experiments show the behavior of the method and the scale of the reduction.

%% The Appendices part is started with the command \appendix;
%% appendix sections are then done as normal sections
%% \appendix

%% \section{}
%% \label{}

%% References
%%
%% Following citation commands can be used in the body text:
%% Usage of \cite is as follows:
%%   \cite{key}         ==>>  [#]
%%   \cite[chap. 2]{key} ==>> [#, chap. 2]
%%

%% References with bibTeX database:

\bibliographystyle{elsarticle-num}
\bibliography{/home/paulcon/Dropbox/workspace/paulconstantine}

%% Authors are advised to submit their bibtex database files. They are
%% requested to list a bibtex style file in the manuscript if they do
%% not want to use elsarticle-num.bst.

%% References without bibTeX database:

% \begin{thebibliography}{00}

%% \bibitem must have the following form:
%%   \bibitem{key}...
%%

% \bibitem{}

% \end{thebibliography}

\end{document}